\def\pts{\,:\,}
\def\calt{\mathcal{T}}
\theoremstyle{plain}
\newtheorem{thm}{Theorem}[section]
\newtheorem{prop}[thm]{Proposition}
\newtheorem{cor}[thm]{Corollary}
\newtheorem{lem}[thm]{Lemma}
\theoremstyle{definition}
\theoremstyle{remark}
\newtheorem{rem}[thm]{Remark}
\def\calf{\mathcal{F}}
\def\ent{\mathbb{Z}}
\def\map{\rightarrow}
\def\epi[#1]{
\xymatrix@C=#1pt{
\ar@{->>}[r] &
}
}
\title{Finite metacyclic groups as active sums of cyclic subgroups\\ 
Les groupes finis m\'etacycliques comme sommes actives de sous-groupes cycliques } 
\author{A. D\'iaz-Barriga$^*$, F. Gonz\'alez-Acu\~na$^*$, F. Marmolejo\footnote{Instituto de Matem\'aticas, UNAM, Mexico} $\ $and
N. Romero\footnote{MATHGEOM, EPFL, Switzerland}}
\date{}
\begin{document}


\maketitle
\begin{abstract}
The notion of active sum provides an analogue for groups of that of direct sum for abelian groups. One natural question then is which groups are the active sum of a family of cyclic subgroups. Many groups have been found to give a positive answer to this question, while the case of finite metacyclic groups remained unknown. In this note we show that every finite metacyclic group can be recovered as the active sum of a discrete family of cyclic subgroups.
\end{abstract}


\renewcommand{\abstractname}{R\'esum\'e}

\begin{abstract}
La notion de somme active fournit un analogue pour les groupes de celle de somme directe pour les groupes ab\'eliens. Une question naturelle est alors de d\'eterminer quels groupes sont somme active d'une famille de sous-groupes cycliques. De nombreux groupes possèdent cette propri\'et\' e, mais la question demeurait ouverte pour les groupes finis m\'etacycliques. Dans cette note, nous montrons que tout groupe fini m\'etacyclique s'obtient comme somme active d'une famille discrète de sous-groupes cycliques.
\end{abstract}

\section{Introduction}

The active sum of groups has its origin in a paper of Tom\'as \cite{tomasana}, where one of the main
motivations was to find an analogue of the direct sum of groups, but this time taking into account the mutual actions of the groups 
in question. For an arbitrary group $G$, the active sum of a generating family of subgroups, closed under conjugation and with partial order compatible with inclusion, is a group $S$ which has $G$ as an homomorphic image and that coincides with the direct sum of the family in case of $G$ being abelian. 
Since every finite abelian group is the direct sum of cyclic subgroups, it is natural to ask whether every
finite group is the active sum of cyclic subgroups. In \cite{sumasact}
it is shown that free groups, semidirect products of two finite cyclic groups, most of
the groups of the form $SL_n(q)$, and Coxeter groups are all active sums of discrete families of cyclic subgroups.  
On the other hand, in \cite{sumasact2} we gave examples of finite groups  
that are not active sums of cyclic subgroups: the alternating groups $A_n$ for $n\geq 4$, many of the groups
of the form $PSL_n(q)$, and others. However, at the time we were unable to determine whether every finite metacyclic group 
is the active sum of a discrete family of cyclic subgroups, a question we settle in this paper.

Dealing with the definition of active sum in concrete examples is not an easy task. Fortunately, one of its main properties is that for any group $G$ and any family $\mathcal{F}$ satisfying the conditions mentioned before, the active sum $S$ of $\mathcal{F}$ satisfies $S/Z(S)\cong G/Z(G)$ ($Z(S)$ and $Z(G)$ denote the center of $S$ and $G$
respectively).
This allows us to obtain conditions of homological nature, namely the surjectivity of Ganea's homomorphism, that helps us decide whether $S$ is isomorphic to $G$. As we will see in Section 3, in case of  $G$ being finite metacyclic, this reduces the problem to verify that the family proposed is  \textit{regular} and \textit{independent}. The notions of  regularity and independence will be recalled in the next section, while Ganea's homomorphism will be described in Section 3.

A group $G$ is {\em metacyclic} if it has a normal cyclic subgroup $K$ such
that $G/K$ is also cyclic. 
For a finite metacyclic group $G$, Hyo-Seob Sim \cite{sim} introduces the \textit{Standard Hall decomposition} of a given metacyclic factorization, splitting $G$ as a semidirect product of two groups $N$ and $H$ of relatively prime order. $N$ turns out to be a semidirect product of two cyclic groups and $H$ is nilpotent. By doing this, all the difficulties that arise when $G$ is not a semidirect product of two cyclic groups are gathered in the nilpotent group $H$. This is precisely what we will use to show that every finite metacyclic group is an active sum of cyclic subgroups. As we mentioned above, we know this to be true for the semidirect product of two cyclic groups; we will prove it for finite metacyclic $p$-groups, and we will finally use the Standard Hall decomposition to prove it for any finite metacyclic group.

The active sum shares some nice properties  with other similar constructions, such as the cellular cover of a group (see for example \cite{chac} or\cite{farj}). In fact, it seems quite plausible that these two constructions are closely related. For example, if the family contains only cyclic groups of order $n$, one can show that the active sum is $\ent/n\ent$-cellular, in the sense of Definition 2.2 of \cite{chac}. As a consequence, we have that Coxeter groups are $\ent/2\ent$-cellular, since any Coxeter group is the active sum of a family of groups of order 2. For more information about the active sum, and its relation to other areas, see the introduction to
\cite{sumasact}. 

\section{Preliminaries}

\subsection{Active sum}

We take as our definition of active sum the one given in \cite{sumasact}. Since in this paper we will
only consider discrete families of distinct subgroups, for the convenience of the reader we briefly describe the results
we need in this particular setting. Thus we take a (finite) group $G$, and a family $\calf$ of distinct subgroups of $G$
that is generating ($\langle\bigcup_{F\in\calf}F\rangle=G$) and closed under conjugation ($\forall F\in
\calf, g\in G, F^g=g^{-1}Fg\in \calf$). The {\em active sum} $S$ of $\calf$ is the free product of the elements of $\calf$
divided by the normal subgroup generated by the elements of the form $h^{-1}\cdot g\cdot h\cdot (g^h)^{-1}$, with $h\in F_1$, 
$g\in F_2$,  $F_1, F_2\in\calf$ (and thus, $g^h\in F_2^h=h^{-1}F_2h\in\calf$). We obtain a canonical homomorphism
$\varphi\pts S\to G$, surjective since $\calf$ is generating. 
By Lemma 1.5 of \cite{sumasact},  $\varphi^{-1}(Z(G))=Z(S)$, so we
obtain a central extension
\[
\xy
(0,0)*+{H_2(S)}="1",
(20,0)*+{H_2(G)}="2",
(40,0)*+{\textup{ker\,}\varphi}="3",
(60,0)*+{H_1(S)}="4",
(80,0)*+{H_1(G)}="5",
(100,0)*+{1}="6",
\POS "1" \ar^{\varphi_*} "2",
\POS "2" \ar "3",
\POS "3" \ar "4",
\POS "4" \ar^{\varphi_*} "5",
\POS "5" \ar "6",
\endxy .
\]
From Theorem 1.12 in \cite{sumasact} we have that $\varphi_*\pts H_1(S)\to H_1(G)$ is injective (and thus an iso) iff $\calf$ is  regular and independent, notions we will recall below. On the other hand, we will see in Section 3 that when $G$ is a metacyclic group, the arrow $\varphi_*\pts H_2(S)\to H_2(G)$ is always surjective. As a consequence, in the particular case
of $G$ being metacyclic, $\varphi\pts S\to G$ is an isomorphism iff $\calf$ is regular and independent. 

The notions of regularity and independence are explained in more detail in Section 1.2. of \cite{sumasact}. For regularity we recall Lemma 1.9 of this reference:

\begin{lem}
Assume that the family $\calf$ is discrete. The family $\calf$ is regular iff
$[F,N_G(F)]=F\cap G'$ for every $F\in\calf$, where $N_G(F)$ is the normalizer of
$F$ in $G$.
\end{lem}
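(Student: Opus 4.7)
The plan is to reduce the global notion of regularity to a condition on each individual $F \in \calf$, using the discreteness hypothesis to decouple internal and external relations, and then to identify this per-$F$ condition with the stated equality of subgroups.

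First I would recall the definition of regularity from \cite{sumasact}: it is a property of the family $\calf$ that controls which internal relations on a single $F$ are forced by the defining conjugation relations $h^{-1}gh \cdot (g^h)^{-1}$ of the active sum. Discreteness of $\calf$ ensures that for any $g \in G$ the conjugate $F^g$ is either equal to $F$ (exactly when $g \in N_G(F)$) or is a \emph{distinct} member of $\calf$. Thus conjugations by elements outside $N_G(F)$ produce relations \emph{between} $F$ and a different $F' \in \calf$, while only conjugations by elements of $N_G(F)$ can impose new relations internal to $F$.

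Next I would examine the image of $F$ in the abelianization $S^{ab}$ under the canonical map $F \to S \to S^{ab}$. For $h \in N_G(F)$ and $g \in F$, the defining relation $h^{-1}gh = g^h$ descends in $S^{ab}$ to $g = g^h$, equivalently to the vanishing of the commutator $[g,h]$. Since by the previous step these are the only internal relations on $F$ that survive abelianization, the image of $F$ in $S^{ab}$ is precisely $F / [F, N_G(F)]$. On the other hand, the image of $F$ in $G^{ab} = G/G'$ under $\varphi$ is $F/(F \cap G')$.

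Finally, I would invoke the characterization of regularity from Section 1.2 of \cite{sumasact}, which, in the discrete setting, amounts to saying that $\calf$ is regular iff the canonical surjection $F / [F, N_G(F)] \to F / (F \cap G')$ induced by $\varphi$ is an isomorphism for every $F \in \calf$. Since the inclusion $[F, N_G(F)] \subseteq F \cap G'$ is automatic (commutators lie in $G'$, and the normalizer condition keeps us inside $F$), this reduces to demanding the reverse inclusion, yielding the stated equality.

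The main obstacle I anticipate is the bookkeeping needed to pass from the global definition of regularity (phrased in \cite{sumasact} simultaneously in terms of all $F \in \calf$) to the separated per-$F$ equalities. Discreteness is exactly what licenses this decoupling, since it prevents the conjugation relations between distinct members of $\calf$ from contributing extra internal identifications inside any single $F$.
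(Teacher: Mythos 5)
The first thing to note is that the paper does not prove this lemma at all: it is quoted verbatim as Lemma 1.9 of \cite{sumasact}, so there is no in-paper argument to compare yours against. Your reconstruction does follow the route that the surrounding text suggests, namely computing the abelianization of the active sum $S$ and matching it against $G/G'$, which is exactly how the lemma is used together with Theorem 1.12 of \cite{sumasact} later in Section 2. In that sense the strategy is the right one.

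There are, however, two gaps. First, your claim that ``only conjugations by elements of $N_G(F)$ can impose new relations internal to $F$'' is not accurate as stated. A relation $h^{-1}gh=g^h$ with $h\notin N_G(F)$ identifies the summand of $F$ in $\bigoplus_{F\in\calf}F^{\mathrm{ab}}$ with the summand of the \emph{distinct} member $F^h$, and a chain of such identifications $F\to F^{h_1}\to F^{h_1h_2}\to\cdots\to F^{h_1\cdots h_k}=F$ returns to $F$ and does impose an internal relation, namely $g\equiv g^{h_1\cdots h_k}$. The correct observation is that the composite conjugator $h_1\cdots h_k$ then lies in $N_G(F)$, and conversely, because $\calf$ is generating, \emph{every} element of $N_G(F)$ is such a product; only then do you get that the internal relations are exactly $[F,N_G(F)]$ and hence $S^{\mathrm{ab}}\cong\bigoplus_{T\in\calt}T/[T,N_G(T)]$. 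Without the generating hypothesis you would only kill $[F,\,N_G(F)\cap\langle\bigcup_{F'\in\calf}F'\rangle]$. Second, your final step invokes ``the characterization of regularity from Section 1.2 of \cite{sumasact}'' in a form that is essentially the statement to be proved (that regularity means each $F/[F,N_G(F)]\to F/(F\cap G')$ is an isomorphism). Since you never state the actual definition of regularity given there, the argument as written is circular at precisely the point where the real content lies; to complete it you would need to start from that definition and derive the per-$F$ isomorphism condition, rather than assume it. The automatic inclusion $[F,N_G(F)]\subseteq F\cap G'$ and the reduction of the equality to the reverse inclusion are fine.
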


As for independence, we consider first a complete set of representatives $\calt$
of conjugacy classes of elements of $\calf$. We call $\calt$ a transversal of $\calf$. 
According to Definition 1.11 of \cite{sumasact}, $\calf$ is independent iff the canonical
homomorphism
\[
\bigoplus_{F\in \calt}F/(F\cap G')\to G/G'
\]
is an isomorphism.

\subsection{Metacyclic groups}

A group $G$ is called metacyclic if it has a normal cyclic subgroup $K$ with cyclic quotient. 
By taking generators $a$ of $K$ and $bK$ of $G/K$, $G$ has a presentation of the form.
\begin{equation}\label{presentacion}
G=\langle a,b|a^m=1, b^s=a^t, b^{-1}ab=a^r\rangle
\end{equation}
where the integers $m$, $s$, $t$ and $r$ satisfy $r^s\equiv 1\ (\textsl{mod}\ m)$ and $m|t(r-1)$.
Proposition 1 of Chapter 7 in Johnson \cite{johnson} shows that a group is metacyclic with cyclic group of order $m$ and quotient group of order  $s$ if and only if it has a presentation of this form. If we let $L=\langle b\rangle$, then $G$ can be written as $G=LK$. This decomposition is called a \textit{metacyclic factorization} of $G$. If $L\cap K=1$, then the metacyclic factorization is called \textit{split}. 

In the rest of the section we will fix $m,\, s,\, t$ and $r$ as the integers of the above presentation, and $G=LK$ 
as above will be a fixed metacyclic factorization of $G$.

\begin{lem}\label{above}
Let $s'$ be the order of $r$ in $\ent_m^*$, and $k=\frac{m}{(r-1,m)}$.
The center of $G$ is the group $Z(G)=\langle a^k,b^{s'}\rangle$ and the Schur multiplier of $G/Z(G)$ is cyclic of order $q/k$ where $q=(r-1,\, k)(\frac{r^{s'}-1}{r-1},\, k)$.
\end{lem}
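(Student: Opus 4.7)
The lemma has two parts, which I treat in sequence.

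\emph{The center.} I would write a general element of $G$ as $a^i b^j$ and impose commutation with each of the two generators. Using $b^{-1} a b = a^r$ iteratively, the condition $[a^i b^j, a] = 1$ reduces to $r^j \equiv 1 \pmod m$, which by the definition of $s'$ is equivalent to $s' \mid j$; the condition $[a^i b^j, b] = 1$ reduces to $(r-1) i \equiv 0 \pmod m$, which by the definition of $k = m/(r-1, m)$ is equivalent to $k \mid i$. Hence $Z(G) = \langle a^k, b^{s'}\rangle$.

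\emph{Structure of $G/Z(G)$.} Writing $\bar a, \bar b$ for the images of the generators, the intersection $\langle a\rangle \cap Z(G) = \langle a^k\rangle$ shows that $\bar a$ has order exactly $k$, and the commutation argument above shows that $b^j \in Z(G)$ forces $s' \mid j$, so that $\bar b$ has order exactly $s'$. Together with the conjugation relation inherited from $G$, this identifies $G/Z(G)$ as the split metacyclic group $\ent/k \rtimes_r \ent/s'$.

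\emph{The Schur multiplier.} I would compute $H_2(G/Z(G))$ via the Lyndon--Hochschild--Serre spectral sequence of
$$1 \to \ent/k \to G/Z(G) \to \ent/s' \to 1.$$
Since $H_2$ of a finite cyclic group vanishes, the only term of total degree $2$ is $E^2_{1,1} = H_1(\ent/s',\, \ent/k)$, with $\ent/s'$ acting on $\ent/k$ by multiplication by $r$. The main subtlety is the a priori possible differential $d^2\colon E^2_{3,0} \to E^2_{1,1}$ with $E^2_{3,0} = \ent/s'$; however, splitness of the extension forces the edge map $H_*(G/Z(G)) \to H_*(\ent/s')$ to be split surjective, and this kills the differential. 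Consequently $H_2(G/Z(G)) \cong H_1(\ent/s',\, \ent/k)$, and by $2$-periodicity of Tate cohomology for a cyclic group this equals $(\ent/k)^{\ent/s'} / N \cdot (\ent/k)$, where $N = 1 + r + \cdots + r^{s'-1} = (r^{s'}-1)/(r-1)$. A direct count of orders identifies the quotient as a cyclic group of order $(r-1, k)(N, k)/k = q/k$. The technical crux of the argument is the vanishing of $d^2$, which is exactly where splitness is used; everything else is a routine manipulation of the presentation and a computation in cyclic group cohomology.
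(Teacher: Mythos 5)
Your proof is correct. For the center and for the identification of $G/Z(G)$ with the split metacyclic group $\ent/k\rtimes_r\ent/s'$ you follow essentially the same route as the paper: a direct computation with the presentation, testing commutation of $a^ib^j$ against the two generators. (One point worth making explicit: splitness of $G/Z(G)$ requires $\langle\bar a\rangle\cap\langle\bar b\rangle=1$, not only the orders of $\bar a$ and $\bar b$; this is immediate from your center computation, since $\bar a^i=\bar b^j$ means $a^ib^{-j}\in Z(G)$ and hence $k\mid i$ and $s'\mid j$. The paper makes essentially this remark.) Where you genuinely diverge is the Schur multiplier: the paper simply quotes Theorem 2.11.3 of Karpilovsky for split metacyclic groups, while you rederive the formula from the Lyndon--Hochschild--Serre spectral sequence of $1\to\ent/k\to G/Z(G)\to\ent/s'\to 1$, using splitness to kill $d^2\colon E^2_{3,0}\to E^2_{1,1}$ (indeed the only differential that can affect total degree $2$, since $E^2_{2,0}$ and $E^2_{0,2}$ already vanish) and then periodicity of cyclic-group homology to evaluate $E^2_{1,1}=(\ent/k)^{\ent/s'}/N\cdot(\ent/k)$; the order count $(r-1,k)(N,k)/k$ agrees with $q/k$, using that $N\cdot(\ent/k)$ lies inside the fixed points because $(r-1)N=r^{s'}-1\equiv 0$ mod $k$. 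Your version is self-contained at the price of the spectral-sequence machinery, whereas the paper's citation is shorter but defers to a result whose standard proof is, in effect, the computation you carried out.
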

\begin{proof} Since $b^{-s'}ab^{s'}=a^{r^{s'}}=a$, it is clear that $b^{s'}\in Z(G)$.
Furthermore, if $a^\ell\in Z(G)$, then $a^\ell=b^{-1}a^\ell b=
a^{\ell r}$. Therefore $m|\ell(r-1)$. The smallest possible positive value
for $\ell$ is $k=\frac{m}{(r-1,\, m)}$. Thus 
$\langle a^k,\, b^{s'}\rangle\subseteq Z(G)$.

On the other hand, if $a^ub^v\in Z(G)$, then 
$b=(a^ub^v)b(a^ub^v)^{-1}=a^uba^{-u}$, therefore $k|u$. Similarly
$a^ub^v=a(a^ub^v)a^{-1}=a^u(ab^va^{-1})$, thus $b^v=ab^va^{-1}$. This means
that $s'|v$.

Since any element of the form $a^p=b^q$ is in $Z(G)$, it is easy to see that $G/Z(G)$ is a semidirect product, therefore it has the following presentation 
\[
G/Z(G)\simeq \langle c,d|c^{k}=1,d^{s'}=1, d^{-1}cd=c^r\rangle .
\]
By Theorem 2.11.3 in $\cite{karpi}$, the Schur multiplier of $G/Z(G)$ is
cyclic of order $\frac{(r-1,k)(\frac{r^{s'}-1}{r-1},k)}{k}$.
\end{proof}

\begin{rem}\label{above2}
It is easy to see that $G'=\langle a^{r-1}\rangle$, so
we also have  $G'\cap Z(G)=\langle a^{r-1}\rangle\cap 
\langle a^k,b^{s'}\rangle=\langle a^{\frac{m}{(r-1,k)}}\rangle$. 
Therefore $|G'\cap Z(G)|=(r-1,k)$.
\end{rem}

\section{Ganea's map, regularity and independence}

Recall from Section 2.5 of \cite{sumasact}
the  exact sequence given by Ganea \cite{ganea}
for $G$ and $Z(G)$:
\[
H_1G\otimes Z(G) \map 
H_2G \map 
H_2(G/Z(G)) \map 
Z(G) \map 
H_1G \map 
H_1(G/Z(G)) \map  0.
\]
Observe that $\textsl{ker}(Z(G)\map H_1G)=Z(G)\cap G'$. Therefore we
obtain the following exact sequence
\[
H_1G\otimes Z(G) \map 
H_2G \map 
H_2(G/Z(G)) \twoheadrightarrow 
Z(G)\cap G'.
\]
Thus, the homomorphism $H_1G\otimes Z(G) \map H_2G$ is epi if and only if
the epimorphism $H_2(G/Z(G)) \map Z(G)\cap G'$ is also mono. This happens if and only if
$|H_2(G/Z(G))|\leq |G'\cap Z(G)|$. In case $G$ is a finite metacyclic group, considering the parameters given in the presentation (\ref{presentacion}), it follows from Lemma \ref{above} and
Remark \ref{above2}, that 
this happens if and only if
\[
\left(\frac{r^{s'}-1}{r-1},k\right)\leq k,
\]
but this is clear. 
We have shown

\begin{prop}
If $G$ is a finite metacyclic group, then Ganea's homomorphism
$H_1G\otimes Z(G)\map H_2G$ is an epimorphism.
\end{prop}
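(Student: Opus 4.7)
The argument is essentially forced by combining Ganea's six-term exact sequence for the central extension $1\to Z(G)\to G\to G/Z(G)\to 1$ with the explicit data furnished by Lemma \ref{above} and Remark \ref{above2}. First I would write down the relevant fragment
\[
H_1G\otimes Z(G) \to H_2G \to H_2(G/Z(G)) \to Z(G) \to H_1G,
\]
and observe that since $H_1G=G/G'$, the kernel of the rightmost arrow is $Z(G)\cap G'$. Exactness then trims the sequence to
\[
H_1G\otimes Z(G) \to H_2G \to H_2(G/Z(G)) \twoheadrightarrow Z(G)\cap G'.
\]

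Second, I would exploit the cokernel-versus-kernel duality packaged in this four-term sequence: the Ganea map $H_1G\otimes Z(G)\to H_2G$ is surjective if and only if $H_2G\to H_2(G/Z(G))$ is zero, if and only if the epimorphism $H_2(G/Z(G))\twoheadrightarrow Z(G)\cap G'$ is an isomorphism. Because $G$ is finite both sides are finite abelian groups, so this last statement is equivalent to the numerical inequality
\[
|H_2(G/Z(G))| \le |Z(G)\cap G'|.
\]

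Third, I would plug in the orders. By Lemma \ref{above}, $|H_2(G/Z(G))| = q/k$ with $q=(r-1,k)\bigl(\tfrac{r^{s'}-1}{r-1},\,k\bigr)$, and by Remark \ref{above2}, $|Z(G)\cap G'|=(r-1,k)$. After cancellation the required inequality becomes
\[
\left(\frac{r^{s'}-1}{r-1},\,k\right)\le k,
\]
which is immediate since the left-hand side is a divisor of $k$.

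The proof has no real obstacle: all of the technical content (computing $Z(G)$, the Schur multiplier of $G/Z(G)$, and $G'\cap Z(G)$ from the parameters $m,s,t,r$) has already been absorbed into the preliminary lemma and remark. The only conceptual step is the general observation that, for a finite central extension, surjectivity of the Ganea map is equivalent to the order inequality above; once stated, verifying it for the metacyclic presentation (\ref{presentacion}) is a one-line gcd comparison.
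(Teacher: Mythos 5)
Your proposal is correct and follows essentially the same route as the paper: the same trimmed Ganea sequence, the same reduction to the order inequality $|H_2(G/Z(G))|\le |Z(G)\cap G'|$, and the same gcd computation via Lemma \ref{above} and Remark \ref{above2}. Nothing to add.
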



Theorem 2.16 in \cite{sumasact} says that if Ganea's homomorphism is surjective, then to prove that the active sum of a family $\mathcal{F}$ is isomorphic to $G$, it is enough to show that $\mathcal{F}$ is regular and independent. Thus we obtain

\begin{cor}\label{regindepmeta}
If $G$ is a finite metacyclic group, then every generating regular and independent
family of subgroups of $G$ has active sum isomorphic to $G$.
\end{cor}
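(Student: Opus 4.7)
The plan is to read the Corollary as an immediate consequence of the Proposition just established together with Theorem~2.16 of \cite{sumasact}. The Proposition has shown that Ganea's homomorphism $H_1G\otimes Z(G)\map H_2G$ is surjective for any finite metacyclic $G$, while Theorem~2.16 of \cite{sumasact} states that whenever Ganea's map is surjective for $G$, a generating family of subgroups has active sum isomorphic to $G$ as soon as it is regular and independent. Combining these two facts yields the statement.

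For the reader who wants to see the mechanism, I would recall the argument already sketched in Section~2. Let $\varphi\pts S\to G$ be the canonical map out of the active sum $S$ of $\calf$. Since $\calf$ generates $G$, $\varphi$ is surjective, so the task is to check $\textsl{ker}\,\varphi=1$. The central extension
\[
H_2(S)\xrightarrow{\varphi_*}H_2(G)\map \textsl{ker}\,\varphi\map H_1(S)\xrightarrow{\varphi_*}H_1(G)\map 1
\]
displayed earlier reduces this to two things: that the right-hand $\varphi_*$ is an isomorphism, and that the left-hand $\varphi_*$ is an epimorphism. The right-hand isomorphism follows from Theorem~1.12 of \cite{sumasact} as soon as $\calf$ is regular and independent, which is our hypothesis.

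For the left-hand epimorphism, I would invoke the naturality of Ganea's sequence applied to the pairs $(S,Z(S))$ and $(G,Z(G))$: by Lemma~1.5 of \cite{sumasact} we have $\varphi^{-1}(Z(G))=Z(S)$, so $\varphi$ induces a map of Ganea exact sequences, and surjectivity of the Ganea map for $G$ (the Proposition) combined with a diagram chase forces $\varphi_*\pts H_2(S)\to H_2(G)$ to be onto. This comparison between the two Ganea sequences is precisely the step packaged as Theorem~2.16 of \cite{sumasact}, and it is the only non-routine ingredient; since the surjectivity hypothesis on Ganea's map is now available for all finite metacyclic $G$ by the Proposition, no further obstacle remains and the Corollary follows.
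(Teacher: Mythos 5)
Your proposal is correct and follows exactly the paper's route: the paper derives the Corollary in one line by combining the Proposition (surjectivity of Ganea's map for finite metacyclic $G$) with Theorem~2.16 of \cite{sumasact}. Your additional unpacking of the five-term exact sequence and the role of Theorem~1.12 is a faithful expansion of what that cited theorem packages, not a different argument.
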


Our task now is to show that every finite metacyclic group has a regular
and independent family composed solely of cyclic subgroups.

For the rest of the section we will assume that $G$ is a metacyclic group with a presentation (\ref{presentacion}).

\medskip

\begin{rem} We may assume that $G/G'$ is not cyclic since it is easy to prove that
$G/G'$ cyclic implies that $G$ is a semidirect product of two cyclic
groups.
\end{rem}

\begin{lem}
\label{condindepcoro}
The family consisting of the groups $\langle a\rangle$,
$\langle b\rangle$ and their conjugates is generating and 
independent for $G$ if and only if
$(m,\, r-1)|t.$
\end{lem}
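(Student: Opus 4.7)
The plan is to apply the definition of independence from Section~2 with transversal $\calt=\{\langle a\rangle,\langle b\rangle\}$: note that $K=\langle a\rangle$ is normal, so its conjugacy class is itself, and we may take $\langle b\rangle$ as a single representative of the other class. The family is manifestly generating, since $a$ and $b$ generate $G$ by~(\ref{presentacion}). Consequently the canonical map
\[
\Phi\pts \langle a\rangle/(\langle a\rangle\cap G')\oplus \langle b\rangle/(\langle b\rangle\cap G')\to G/G'
\]
is surjective, and since all groups in sight are finite, $\Phi$ is an isomorphism iff its source and target have the same order. The whole lemma thus reduces to an order computation.

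My next step is to compute these orders using Remark~\ref{above2}. Writing $d=(m,r-1)$, one has $|G|=ms$ and $|G'|=m/d$, hence $|G/G'|=ds$. Since $\langle a\rangle\cap G'=\langle a^d\rangle$, the first summand has order $d$. For the second summand, $b$ has order $sm/(m,t)$ because $b^s=a^t$ has order $m/(m,t)$ in $\langle a\rangle$. A short calculation shows that $b^j\in G'=\langle a^d\rangle$ forces first $s\mid j$ and then, writing $j=s\ell$, the additional condition $d\mid t\ell$; this identifies $\langle b\rangle\cap G'=\langle b^{sd/(d,t)}\rangle$, whence
\[
|\langle b\rangle/(\langle b\rangle\cap G')| = s\cdot\gcd\!\Bigl(\tfrac{d}{(d,t)},\,\tfrac{m}{(m,t)}\Bigr).
\]

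Combining the three order computations, $\Phi$ is an isomorphism iff $\gcd(d/(d,t),\,m/(m,t))=1$. The final step is to observe that this gcd equals $1$ iff $d/(d,t)=1$, i.e., $d\mid t$. The implication $d\mid t\Rightarrow \gcd=1$ is trivial. Conversely, if some prime $p$ divides $d/(d,t)$, then $v_p((d,t))<v_p(d)$, which forces $v_p(t)<v_p(d)\le v_p(m)$; but then $v_p(m/(m,t))>0$, so $p$ also divides $m/(m,t)$, contradicting coprimality.

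The step I expect to be most delicate is the identification of $\langle b\rangle\cap G'$: one must track two successive divisibility conditions on the exponent of $b$ before converting the resulting gcd condition into the clean divisibility $(m,r-1)\mid t$. Everything else is routine order arithmetic.
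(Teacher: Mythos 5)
Your proof is correct and follows essentially the same route as the paper's: reduce independence to the order count $|\langle a\rangle/(\langle a\rangle\cap G')|\cdot|\langle b\rangle/(\langle b\rangle\cap G')|=|G/G'|=(m,r-1)s$ and then identify $\langle b\rangle\cap G'$. The paper shortcuts your explicit computation by noting that $\langle b\rangle\cap G'\subseteq\langle b\rangle\cap\langle a\rangle=\langle b^s\rangle$ by minimality of $s$, so the second quotient has order $s$ iff $b^s=a^t$ lies in $G'=\langle a^{(m,r-1)}\rangle$, i.e.\ iff $(m,r-1)\mid t$, which avoids your closing gcd/valuation analysis.
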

\begin{proof}
The family is clearly generating. 
The quotient group $G/G'$ is an abelian, non-cyclic metacyclic group, which means it is the direct sum of two cyclic groups.
To ask for the family to be independent amounts to ask for 
\begin{displaymath}
\frac{G}{G'}= \frac{\langle a\rangle}{G'}\oplus \frac{\langle b\rangle G'}{G'}.
\end{displaymath}
Now, the order of $G/G'$ is $(m,\, r-1)s$, while the order of $\langle a\rangle/G'$ is $(m,\, r-1)$, so the above equality holds if and only if $\langle b\rangle/(\langle b\rangle \cap G')$ has order $s$. Since $s$ is the smallest integer such that $b^s\in\langle a\rangle$, this happens iff $\langle b\rangle \cap G'=\langle b^s\rangle$ which happens iff $(m,\, r-1)|t$.
\end{proof}

\begin{thm}
If $(m,r-1)|t$, then the metacyclic group $G$ is the active sum of the family
consisting of $\langle a\rangle$, $\langle b\rangle$ and its conjugates.
\end{thm}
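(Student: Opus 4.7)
The strategy is to apply Corollary~\ref{regindepmeta}: since $G$ is finite metacyclic, it suffices to show that the family $\calf=\{\langle a\rangle, \langle b\rangle\}\cup\{\text{their conjugates}\}$ is generating, regular, and independent. Lemma~\ref{condindepcoro} already handles generation and independence under the hypothesis $(m,r-1)\mid t$, so the entire task is to establish regularity.

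By the regularity criterion of Lemma~2.1, one must verify $[F, N_G(F)] = F\cap G'$ for every $F\in\calf$. Both sides of this equality are conjugation invariant (using that $G'\triangleleft G$), so it suffices to test the two transversal representatives $F=\langle a\rangle$ and $F=\langle b\rangle$. The case $F=\langle a\rangle$ is immediate: $\langle a\rangle$ is normal in $G$, so $N_G(\langle a\rangle)=G$, and the single commutator $[a,b]=a^{r-1}$ generates both $[\langle a\rangle, G]$ and $\langle a\rangle \cap G' = \langle a^{r-1}\rangle$ (the latter by Remark~\ref{above2}).

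The substantive case is $F=\langle b\rangle$, where I plan to compute the normalizer explicitly. The relation $b^{-1}ab=a^r$ gives $a^{-i}ba^i = ba^{-i(r-1)}$ by a short induction, and hence for $g=a^ib^j$ one obtains $g^{-1}bg = b\cdot a^{-i(r-1)r^j}$. Because $\langle a\rangle\cap\langle b\rangle = \langle a^{(m,t)}\rangle$ and $\gcd(r,m)=1$, the condition $g^{-1}bg\in\langle b\rangle$ simplifies to $(m,t)\mid i(r-1)$. Writing $e=(m,t)$ and $f=(e,r-1)$, this identifies $N_G(\langle b\rangle) = \langle a^{e/f},\, b\rangle$, and then $[b, a^{e/f}] = a^{-e(r-1)/f}$ generates $[\langle b\rangle, N_G(\langle b\rangle)] = \langle a^{e(r-1)/f}\rangle$.

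The main obstacle is the final arithmetic identification $[\langle b\rangle, N_G(\langle b\rangle)] = \langle b\rangle\cap G'$. The proof of Lemma~\ref{condindepcoro} shows that $\langle b\rangle\cap G' = \langle b^s\rangle = \langle a^{(m,t)}\rangle$ under our hypothesis, so the task reduces to the gcd identity $(e(r-1)/f,\, m) = e$. This is exactly where $(m,r-1)\mid t$ plays its decisive role: it forces $f=(m,r-1)$, so $(r-1)/f$ is coprime to $m/(m,r-1)$ and \emph{a fortiori} to its divisor $m/e$; the elementary relation $(eu, m) = e(u, m/e)$ for $e\mid m$ then completes the computation. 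With regularity established for the transversal (and hence for all of $\calf$), Corollary~\ref{regindepmeta} delivers the theorem.
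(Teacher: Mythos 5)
Your proposal is correct and follows essentially the same strategy as the paper: reduce to regularity via Corollary~\ref{regindepmeta} and Lemma~\ref{condindepcoro}, dispose of $\langle a\rangle$ immediately since $G'\subseteq\langle a\rangle$, and then verify regularity of $\langle b\rangle$ by locating suitable powers of $a$ in $N_G(\langle b\rangle)$. The only difference is one of execution: the paper produces a single normalizing element $a^z$ (with $z=-q\beta$ coming from a B\'ezout expression for $(m,r-1)$) satisfying $[b,a^z]=b^s$, whereas you compute the full normalizer $\langle a^{e/f},b\rangle$ and finish with a gcd identity --- both are correct elementary versions of the same verification.
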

\begin{proof}
By the two previous results we only need to show that the family is regular

Observe that $\langle a\rangle$ is always regular since
$G'\subseteq \langle a\rangle$.
We would like to show that $\langle b\rangle$ is regular as well. That is,
we need to see that
$G'\cap \langle b\rangle\subseteq[\langle b\rangle,N_G(\langle b\rangle)]$.
We have $G'\cap \langle b\rangle =\langle b^s\rangle$.
Therefore, we need to show that $b^s\in 
[\langle b\rangle,N_G(\langle b\rangle)]$.

Using the relation $a^b=a^r$ we deduce $b^{a^{-1}}=ba^{r-1}$.
Suppose that  $t=q(m, \, r-1)$ and that $(m,\, r-1)=\alpha m+\beta (r-1)$. If we let $z=-q\beta$ then $ba^t=ba^{-(r-1)z}$, which by the previous equality gives $b^{a^{z}}=ba^t=b^{s+1}$.
We thus have $a^z\in N_G(\langle b\rangle)$ and $b^s=[b,a^{z}]\in 
[\langle b\rangle,N_G(\langle b\rangle)]$, and conclude that
$\langle b\rangle$ is regular. 
\end{proof}

\section{Metacyclic groups as active sums of cyclic subgroups}

We apply the previous theorem to the case of metacyclic
$p$-groups. To do this we use the presentation given on Theorem 3.5  of Sim \cite{sim} for the case $p$ odd, and the presentations of Theorem 4.5 in Hempel \cite{hempel} for the case $p=2$. A direct inspection of all these presentations,  shows that they are either abelian or of the form 
(\ref{presentacion}), that is, they
satisfy the numerical conditions given for (\ref{presentacion}). Furthermore,
it is immediate that, discarding the abelian ones, they satisfy 
the condition of the previous theorem, so we have

%

\begin{thm}\label{allpmeta}
If $G$ is a finite metacyclic $p$-group, then it is the active sum of  the family formed by
$\langle a\rangle$ and the conjugates of $\langle b\rangle$ in $G$
(as given in the above-mentioned presentations).
\endproof
\end{thm}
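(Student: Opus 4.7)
The plan is to reduce the theorem to the previous one by inspecting the known classifications of metacyclic $p$-groups: Sim's Theorem 3.5 in \cite{sim} for $p$ odd, and Hempel's Theorem 4.5 in \cite{hempel} for $p=2$. For each presentation appearing in these lists, I would first verify that it either describes an abelian $p$-group or can be read as an instance of the presentation (\ref{presentacion}), i.e.\ that the numerical conditions $r^{s}\equiv 1\pmod m$ and $m\mid t(r-1)$ are built in to the parameters listed by Sim and Hempel.

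Next, for the non-abelian presentations, I would check the stronger divisibility $(m,r-1)\mid t$ required in the hypothesis of the previous theorem. For $p$ odd this is a single family and the check is essentially one line reading off Sim's parameters. For $p=2$ Hempel distinguishes the ordinary metacyclic 2-groups from a handful of exceptional families, so here I would go through each family in turn and match its parameters to $m,s,t,r$; in every case the divisibility $(m,r-1)\mid t$ is clear from the specific form of $t$ in Hempel's list. Once this is checked, the previous theorem gives $G$ as the active sum of $\langle a\rangle$ together with the conjugates of $\langle b\rangle$.

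The abelian cases have to be treated separately because the previous theorem relied on $G/G'$ being non-cyclic. However, if $G$ is a finite abelian metacyclic $p$-group, then $G=\langle a\rangle\oplus\langle b\rangle$ as an internal direct sum of cyclic groups (with at most one factor trivial), so the family $\{\langle a\rangle,\langle b\rangle\}$ is generating, is regular since $G'=1$, and is independent because the canonical map $\langle a\rangle\oplus\langle b\rangle\to G/G'=G$ is an isomorphism. Corollary \ref{regindepmeta} then yields $S\cong G$.

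The only real obstacle is the bookkeeping for $p=2$: Hempel's list is longer and less uniform than Sim's, so the main effort is patiently reading off $(m,s,t,r)$ for each family and confirming $(m,r-1)\mid t$. Everything else is an application of results already in place.
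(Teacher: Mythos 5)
Your proposal is correct and follows essentially the same route as the paper: the authors likewise reduce to the preceding theorem by direct inspection of the presentations in Sim's Theorem 3.5 ($p$ odd) and Hempel's Theorem 4.5 ($p=2$), checking that each non-abelian case satisfies $(m,r-1)\mid t$ and discarding the abelian ones as immediate. Your explicit treatment of the abelian case via Corollary \ref{regindepmeta} only spells out what the paper leaves implicit.
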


We now consider the general case. That is, $G$  will be a finite metacyclic group with a presentation of the form (\ref{presentacion}) and we fix $K=\langle a\rangle$ and $L=\langle b\rangle$ as a metacyclic factorization $G=LK$ for $G$. From section 5 of \cite{sim}, we have that $G$ has a decomposition $G=N\rtimes H$ where $N$ and $H$ are Hall subgroups with $H$ nilpotent. This decomposition also satisfies that $N=(L\cap N)(K\cap N)$ is a split metacyclic factorization and has following properties:

\begin{lem}
\label{props}
Let $U=L\cap N$ and $V=K\cap N$, we have
\begin{itemize}
\item[\textup{i)}] $U\leqslant \mathbb{C}_N(H):=\{n\in N|\forall h\in H(hn=nh)\}$.
\item[\textup{ii)}] $G'\cap N=V$ and $G'\cap H=H'$.
\end{itemize}
\end{lem}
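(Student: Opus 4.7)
The plan is to unpack the Standard Hall decomposition from Section 5 of \cite{sim}, which provides explicit descriptions of $N$, $H$, $U$ and $V$ as Hall subgroups for a set of primes $\pi$ chosen precisely so that $N$ carries all the non-trivial conjugation action of $b$ on $a$, while $H$ is the nilpotent $\pi'$-part. With these descriptions in hand, both (i) and (ii) reduce to short computations in the cyclic factors $L = \langle b\rangle$ and $K = \langle a\rangle$.

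For (i), I would verify centralisation on generators. Since $U \subseteq L$ and $L$ is abelian, $[U, L \cap H] = 1$ is automatic. For the more substantive commutation with $K \cap H$, pick a generator $u = b^f$ of $U$ and a generator $a^e$ of $K \cap H$, and use $b^{-1}ab = a^r$ iteratively to compute $u^{-1} a^e u = a^{e r^f}$. The defining property of Sim's $\pi$ is precisely that $r^f \equiv 1$ modulo the order of $a^e$, so $a^{e r^f} = a^e$; therefore $U$ centralises a generating set of $H$, hence all of $H$, giving $U \leq \mathbb{C}_N(H)$.

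For (ii), the key observation is that $G' = \langle a^{r-1}\rangle \subseteq K$, so $G' \cap N \subseteq V$ and $G' \cap H \subseteq K \cap H$. Because the Hall decomposition $K = V \times (K \cap H)$ induces a direct-product decomposition of the cyclic subgroup $G'$ into its $\pi$- and $\pi'$-parts, the subgroups $G' \cap N$ and $G' \cap H$ are exactly these two parts. A direct inspection of Sim's definition of $\pi$ shows that every prime dividing $|G'| = m/(m, r-1)$ lies in $\pi$, which forces $G' \cap H = 1$; combined with a matching numerical check that the full $\pi$-part of $G'$ equals $V$, this yields $G' \cap N = V$. For $G' \cap H = H'$, the inclusion $H' \subseteq G' \cap H$ is automatic, and the reverse inclusion follows once we know $H$ is abelian: the same congruence argument as in (i), applied to $L \cap H$ acting on $K \cap H$, shows that $H = (K \cap H) \times (L \cap H)$ is a direct product of cyclic groups, so $H' = 1 = G' \cap H$.

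The main obstacle is extracting the precise numerical content of Sim's decomposition---in particular the congruences on $r$ modulo the relevant divisors of $m$, and the identification of $V$ with the $\pi$-part of $G'$. Once those facts are pulled out of Section 5 of \cite{sim}, each of (i) and (ii) follows by a brief substitution. A secondary subtlety worth flagging is that the equality $G' \cap H = H'$ relies on $H$ being \emph{abelian}, not merely nilpotent, and this stronger property falls out of the same centralisation computation used in (i).
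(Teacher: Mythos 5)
There is a genuine error in your treatment of the second half of (ii). You claim that the centralisation computation from (i), applied to $L\cap H$ acting on $K\cap H$, shows that $H=(K\cap H)\times(L\cap H)$ is abelian, whence $H'=1=G'\cap H$. This is false: in Sim's decomposition $H$ is only \emph{nilpotent}, and its Sylow subgroups $H_p$ are arbitrary metacyclic $p$-groups --- the entire point of the Standard Hall decomposition (and of Section 4 of this paper, which handles non-abelian, non-split metacyclic $p$-groups such as $Q_8$) is that $H$ absorbs exactly those parts of $G$ where the factorization does not split. The defining congruences on $\pi$ make $U$ centralise $H$; they say nothing about $L\cap H$ centralising $K\cap H$. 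A concrete counterexample to your claims: for $G=Q_8$ one gets $N=1$ and $H=Q_8$, so $H$ is non-abelian, $G'\cap H=G'=Z(Q_8)\neq 1$, and the prime $2$ divides $|G'|$ but lies in $\pi'$, not $\pi$. This simultaneously refutes your assertions that $H$ is abelian, that $G'\cap H=1$, and that every prime dividing $|G'|$ lies in $\pi$. (The lemma itself survives in this example because $H'=Z(Q_8)=G'\cap H$, but not for the reason you give.) A correct argument for $G'\cap H\subseteq H'$ has to work with $G'\cap H$ as the $\pi'$-part of the cyclic group $G'=\langle[a,b]\rangle$ and show that this part is generated by a commutator of elements of $H$, using that $U$ centralises $H$; it cannot shortcut through abelianness.

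Two further remarks. First, the key numerical inputs you invoke (``the defining property of Sim's $\pi$'', ``a matching numerical check'' identifying $V$ with the $\pi$-part of $G'$) are asserted rather than verified; that by itself would be tolerable, since the paper's own proof simply cites i) and ii) of Lemma 5.6 of Sim for part (i) and the first half of (ii) and leaves the rest to the reader, but it means the only part you actually argue in detail is the part that is wrong. Second, your reduction of $G'\cap N$ and $G'\cap H$ to the $\pi$- and $\pi'$-parts of the cyclic group $G'$ is a sound idea and is essentially how one would fill in the paper's ``left to the reader'' step --- you just need to replace the abelianness claim with an honest computation inside the (possibly non-abelian) group $H$.
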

\begin{proof}
i) and the first part of ii) are i) and ii) of Lemma 5.6 in \cite{sim}. The second part of ii) is not difficult to prove and so it is left to the reader. 
\end{proof}

Since $N=V\rtimes U$ is a splitting metacyclic factorization, if we let $V=\langle v\rangle$ and $U=\langle u\rangle$, then $N$ has a presentation
\begin{displaymath}
\langle u,\, v\mid v^{\zeta}=u^{\epsilon}=1,\, v^u=v^{\eta} \rangle,
\end{displaymath}
with ${\eta}^{\epsilon}\equiv 1$ mod $\zeta$. By \cite{sumasact} we know that $N$ is the active sum of the family $\mathcal{F}_N=\langle F_i\rangle_{i=0}^{\zeta}$ with $F_0=\langle v \rangle$ and $F_i=\langle uv^{i(\eta-1)}\rangle$ if $i>0$.

On the other hand, $H$ is the direct product of its $p$-Sylow subgroups $H_p$ with $p\in \pi=\pi(H)$. According to  Proposition \ref{allpmeta}, $H_p$ has a metacyclic factorization $B_pA_p$ (with $A_p\trianglelefteq H_p$) such that the family $\mathcal{F}_{H_p}$, consisting of $A_p$, $B_p$ and the conjugates of the latter in $H_p$, is regular and independent. Now consider the family $\mathcal{F}_H=\bigcup_{p\in \pi}\mathcal{F}_{H_p}$.

\begin{lem}
$H$ is the active sum of $\mathcal{F}_H$.
\end{lem}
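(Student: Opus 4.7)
The plan is to apply Corollary \ref{regindepmeta} to $H$: first establish that $H$ is finite metacyclic, then verify that $\mathcal{F}_H$ is generating, regular and independent for $H$. For the metacyclicity of $H$, each $H_p$ is metacyclic (being a Sylow $p$-subgroup of $G$ with metacyclic factorisation $B_pA_p$), so it admits a cyclic normal subgroup $K_p$ with cyclic quotient; since the $|H_p|$ are pairwise coprime, $\prod_{p\in\pi}K_p$ is cyclic and normal in $H=\prod_{p\in\pi}H_p$, with cyclic quotient $\prod_{p\in\pi}H_p/K_p$. Hence $H$ is metacyclic, and Corollary \ref{regindepmeta} is applicable.

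The three remaining properties follow by exploiting the direct-product structure of $H$ together with the regularity and independence of each $\mathcal{F}_{H_p}$ in $H_p$ supplied by Theorem \ref{allpmeta}. The family $\mathcal{F}_H$ generates $H$ and is closed under $H$-conjugation, since for $F\leqslant H_p$ any element of $H_q$ with $q\neq p$ centralises $F$, reducing $H$-conjugation to $H_p$-conjugation (under which $\mathcal{F}_{H_p}$ is closed). For regularity, the same centralisation yields $N_H(F)=N_{H_p}(F)\cdot\prod_{q\neq p}H_q$ and $[F,N_H(F)]=[F,N_{H_p}(F)]$, while $H'=\prod_p H_p'$ gives $F\cap H'=F\cap H_p'$; the regularity criterion recalled in Section 2.1, applied to $\mathcal{F}_{H_p}$ in $H_p$, then delivers $[F,N_H(F)]=F\cap H'$. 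For independence, the centralisation argument shows that the conjugacy classes of $\mathcal{F}_H$ are the disjoint union of those of the $\mathcal{F}_{H_p}$, so a transversal is $\calt_H=\bigsqcup_p\calt_{H_p}$; using $H/H'=\prod_p H_p/H_p'$, the independence map for $\mathcal{F}_H$ splits as the direct sum of the independence maps for the $\mathcal{F}_{H_p}$, each of which is an isomorphism.

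Altogether the argument is essentially direct-product bookkeeping, the substantive work having been done in Theorem \ref{allpmeta} and Corollary \ref{regindepmeta}. There is no real obstacle; the only mildly nontrivial observation is that pairwise coprimality of the $|H_p|$ forces $H$ itself to be metacyclic, so that Corollary \ref{regindepmeta} can be invoked.
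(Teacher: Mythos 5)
Your proof is correct, but it takes a genuinely different route from the paper's. The paper disposes of this lemma in one line: since elements of $H_p$ and $H_{p'}$ commute for $p\neq p'$, all the mutual actions between members of $\mathcal{F}_{H_p}$ and members of $\mathcal{F}_{H_{p'}}$ are trivial, so the active sum of $\mathcal{F}_H$ is the direct product of the active sums of the families $\mathcal{F}_{H_p}$, and each of those is $H_p$ by Theorem \ref{allpmeta}; no appeal to Ganea's map or to regularity/independence of $\mathcal{F}_H$ as a family in $H$ is needed. You instead re-enter the homological machinery: you observe that $H$ is metacyclic (correctly, via the coprimality of the $|H_p|$ --- though one could also just note that $H$ is a subgroup of the metacyclic group $G$), so Corollary \ref{regindepmeta} applies, and you then verify that $\mathcal{F}_H$ is generating, closed under conjugation, regular and independent by direct-product bookkeeping. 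All of those verifications are sound ($N_H(F)=N_{H_p}(F)\cdot\prod_{q\neq p}H_q$, $[F,N_H(F)]=[F,N_{H_p}(F)]$, $F\cap H'=F\cap H_p'$, and the splitting of the independence map over $H/H'\cong\prod_p H_p/H_p'$). Your approach is longer where the paper's is essentially free, but it has a compensating virtue: the regularity and independence of $\mathcal{F}_H$ in $H$ that you establish along the way is exactly what the paper later invokes, without separate justification, in case c) of the proof of Theorem \ref{allmeta} and in the independence argument there, so your bookkeeping is not wasted effort --- it fills in a step the paper leaves implicit.
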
 
\begin{proof} Simply observe that the action of $H_p$ over $H_{p'}$ with $p\neq p'$ is trivial.
\end{proof}

For $G$ we consider then the family $\mathcal{F}$ formed by closing $\mathcal{F}_N\cup \mathcal{F}_H$ under conjugation by $G$.

It is easy to see that the family $\mathcal{F}$ 
consists of $\mathcal{F}_N$ 
together with the conjugates of $A_p$ by $V$ and the conjugates of $B_p$ by $VH_p$. We also observe that $\mathcal{T}=\{U,\, V\}\cup \bigcup_{p\in \pi}\{A_p,\, B_p\}$ is a transversal for $\mathcal{F}$. It suffices to prove that in the case $H_p$ non-cyclic, $B_p$ is not equal to $A_p^{v}$ for some $v$ in $V$. Suppose that $B_p=A_p^{v}$. If we let $\langle b\rangle =B_p$, this would imply $b=vav^{-1}$ for some $a\in A_p$. But $V$ is normal in $G$ so we have $b=av'$ for some $v'\in V$. Since $N\cap H=1$, this implies $b=a$, which  contradicts the assumption of $H_p$ non-cyclic.

\begin{thm}
\label{allmeta}
$G$ is the active sum of the family $\mathcal{F}$.
\end{thm}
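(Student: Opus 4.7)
The plan is to invoke Corollary \ref{regindepmeta}: since $G$ is finite metacyclic, it suffices to prove that $\mathcal{F}$ is generating, regular and independent. That $\mathcal{F}$ is generating is immediate from $G = NH$ together with the facts that $\mathcal{F}_N$ generates $N$ and $\mathcal{F}_H$ generates $H$.

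For independence, I work with the transversal $\mathcal{T} = \{U,V\} \cup \bigcup_{p \in \pi}\{A_p, B_p\}$ already exhibited. First I compute the intersections with $G'$ via Lemma \ref{props}: $V \cap G' = V$ and $U \cap G' = U \cap V = 1$ (since $N = V \rtimes U$ is split), while $A_p \cap G' = A_p \cap H_p'$ and $B_p \cap G' = B_p \cap H_p'$ (using $G' \cap H = H'$ and $H' = \prod_p H_p'$). Since $V \subseteq G'$ and $H$ centralizes $U$ by Lemma \ref{props}(i), the quotient $G/V$ is the direct product $U \times H$, and quotienting further by the image of $G'$ (which lies in $H$ and equals $H'$) yields $G/G' \cong U \oplus \bigoplus_{p \in \pi} H_p/H_p'$. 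The independence of each $\mathcal{F}_{H_p}$ in $H_p$ then refines each summand $H_p/H_p'$ into $A_p/(A_p \cap H_p') \oplus B_p/(B_p \cap H_p')$, giving the required isomorphism between $\bigoplus_{F \in \mathcal{T}} F/(F \cap G')$ and $G/G'$.

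For regularity I verify $[F, N_G(F)] = F \cap G'$ for each $F \in \mathcal{T}$. The case of $U$ is immediate: $U$ is abelian and $H$ centralizes it by Lemma \ref{props}(i), so $[U, N_G(U)] = 1 = U \cap G'$. For $V$: since $V = G' \cap N$ is characteristic in $N$, it is normal in $G$, so $N_G(V) = G$; the standard commutator decomposition for $G = N \rtimes H$ combined with $[U, H] = 1$ gives $G' \cap N = [V, U]\cdot[V, H]$, which equals $V$ by Lemma \ref{props}(ii), and hence $[V, G] = [V, U]\cdot[V, H] = V = V \cap G'$. For $A_p$ (and symmetrically $B_p$): using $H = \prod_p H_p$ (so $H_{p'}$ centralizes $H_p$ for $p' \neq p$) and Lemma \ref{props}(i) (so $U \subseteq C_N(A_p)$), the normalizer $N_G(A_p)$ factors as $C_N(A_p)\cdot H$, and $[A_p, N_G(A_p)]$ reduces to $[A_p, N_{H_p}(A_p)]$, which equals $A_p \cap H_p' = A_p \cap G'$ by the regularity of $\mathcal{F}_{H_p}$ already established in Theorem \ref{allpmeta}.

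The main obstacle is the regularity of $V$: because $V$ lies entirely inside $G'$, one must prove the non-trivial inclusion $V \subseteq [V, G]$, which is not automatic. It rests essentially on Lemma \ref{props}(ii) (identifying $G' \cap N$ with $V$) together with the commutator identity $[V, G] = [V, U]\cdot[V, H]$ that comes from the semidirect-product structure and the vanishing of $[U, H]$. All the remaining verifications are bookkeeping that follow directly from the Standard Hall decomposition and from the previously established regularity and independence of $\mathcal{F}_N$ and of the $\mathcal{F}_{H_p}$.
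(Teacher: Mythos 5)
Your proof is correct and follows the same overall strategy as the paper: invoke Corollary \ref{regindepmeta} and verify regularity and independence on the transversal $\mathcal{T}$ using Lemma \ref{props} together with the previously established properties of $\mathcal{F}_N$ and of the $\mathcal{F}_{H_p}$. The genuine divergence is in the crucial step, the regularity of $V$. The paper argues computationally: from presentation (\ref{presentacion}) it gets $[V,G]=[V,L]=\langle v^{r-1}\rangle$ and then proves $(r-1,|V|)=1$ by comparing the exponent of a putative common prime in $|K|$, in $|G'|=|K|/(r-1,|K|)$, and in $|V|$, using that $V$ is a Hall subgroup of $K$. You instead use the structural identity $G'\cap N=[N,G]=N'[N,H]=[V,U]\cdot[V,H]=[V,G]$, which follows from the two semidirect decompositions together with $[U,H]=1$; combined with $G'\cap N=V$ this yields $[V,G]=V$ with no arithmetic at all. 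Both are valid; yours is more conceptual and sidesteps the coprimality computation, while the paper's stays at the level of the presentation. Similarly, your derivation of $G/G'\cong U\oplus H/H'$ by passing to $G/V\cong U\times H$ and then to its abelianization replaces the paper's explicit homomorphism $f(uvh)=u+hH'$, to the same effect. Two minor economies you could make: for $F=A_p$ or $B_p$ there is no need to factor $N_G(F)$ as $C_N(F)\cdot N_H(F)$ (a claim that would itself require justification); the sandwich $F\cap H'=[F,N_H(F)]\subseteq[F,N_G(F)]\subseteq F\cap G'=F\cap H'$ already forces equality, which is how the paper argues. Likewise, for $U$ the equality $[U,N_G(U)]=1$ follows at once from the automatic containment $[U,N_G(U)]\subseteq U\cap G'=1$, with no need to analyze $N_G(U)$.
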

\begin{proof}
It suffices to show that $\mathcal{F}$ is regular and independent.

We first prove regularity. We must show that for every $F$ in $\mathcal{T}$ we have $[F,\, N_G(F)]=F\cap G'$.

\begin{itemize}
\item[a)] $F=V$.

Since $N\cap G'=V$, we have $V\cap G'=V$. On the other hand, $N_G(V)=G$, so $[V,\, N_G(V)]=[V,\, G]$. Now, for any $C\leqslant K$ we have $[C,\, G]=[C,\, L]$. Also, if $C=\langle c\rangle$, presentation (\ref{presentacion}) gives $[C,\, L]=\langle c^{r-1}\rangle$. That is, $[V,\, G]=\langle v^{r-1}\rangle$ if $v$ is a generator of $V$. Let us see that $(r-1,\, |V|)=1$. Suppose there exists a prime $p$ that divides $r-1$ and $|V|$. Then $p$ divides $|K|$ and we have $p\mid (r-1,\, |K|)$. On the other hand $|G'|$ is equal to $|K|/(r-1,\, |K|)$, so the exponent of $p$ in $|G'|$ is strictly smaller than its exponent on $|K|$. This is a contradiction since $|V|$ divides $|G'|$ and the exponent of $p$ in $|V|$ is the same as the one in $|K|$, because $V$ is a Hall subgroup of $K$.

\item[b)] $F=U$

Since $N\cap G'=V$, we have $U\cap G'=1$.

\item[c)] $F=A_p$ or $F=B_p$ for some $p$ in $\pi$.

By the regularity of $\mathcal{F}_H$ we have
\begin{displaymath}
F\cap H'=[F,\, N_H(F)]\subseteq [F,\, N_G(F)]\subseteq F\cap G',
\end{displaymath}
but by Lemma \ref{props} $F\cap G'= F\cap H'$.
\end{itemize}

For independence we consider again the transversal $\mathcal{T}$. 

We prove first that $G/G'$ is isomorphic to $U\oplus (H/H')$. Since $N$ is the semidirect product of $U$ and $V$ and $G$ is the semidirect product of $N$ and $H$, each element $g\in G$ can be written in a unique way as $uvh$ for some $u\in U$, $v\in V$ and $h\in H$. Thus we can define a function $f:G\rightarrow U\oplus H/H'$ that sends $g$ to $u+hH'$. Now take $g_1g_2\in G$ and write $g_i=u_iv_ih_i$ for $i=1,\,2$. Since $U\leqslant \mathbb{C}_N(H)$, we have $g_1g_2=u_1v_1u_2h_1v_2h_2$, but also $V$ is a normal subgroup of $G$, so this is equal to $u_1u_2v'h_1h_2$ for some $v'\in V$ and $f$ is a morphism of groups. Finally, if $f(uvh)=1$, then $u=1$ and $vh\in G'$, which gives  the isomorphism. Now, Lemma \ref{props} gives us $UG'/G'\cong U$ and $VG'/G'\cong 1$. 
On the other hand, since $\mathcal{F}_H$ is independent and $\bigcup_{p\in \pi}\{A_p,\, B_p\}$ is a transversal for it, we have 
\begin{displaymath}
H/H'\cong \bigoplus_{p\in \pi}A_pH'/H'\oplus B_pH'/H', 
\end{displaymath}
but $H\cap G'=H'$ implies that if $F_p=A_p$ or $B_p$, then $F_pH'/H'\cong F_pG'/G'$.
\end{proof}

\bibliography{sumasIIIc.bib}{}
\bibliographystyle{plain}

\begin{flushleft}
E-mail: \texttt{diazb$\, $@$\, $matem.unam.mx,  fico$\, $@$\, $matem.unam.mx}, 

$\quad \quad \quad \ \, $\texttt{quico$\, $@$\, $matem.unam.mx, nadiaro$\, $@$\, $ciencias.unam.mx} 
\end{flushleft}

\end{document}